\def\Z{\mathbb{Z}}
\def\Q{\mathbb{Q}}
\def\R{\mathbb{R}}
\def\H{\mathbb{H}}
\def\N{\mathbb{N}}
\def\C{\mathbb{C}}
\def\P{\mathbb{P}}
\def\PSL{{\rm PSL}}
\def\sQ{\mathcal{Q}}
\def\reg{{\rm reg}}
\newcommand{\pfrac}[2]{\left(\frac{#1}{#2}\right)}
\newcommand{\ptfrac}[2]{\left(\tfrac{#1}{#2}\right)}
\newcommand{\pMatrix}[4]{\left(\begin{matrix}#1 & #2 \\ #3 & #4\end{matrix}\right)}
\renewcommand{\pmatrix}[4]{\left(\begin{smallmatrix}#1 & #2 \\ #3 & #4\end{smallmatrix}\right)}
\renewcommand{\bar}[1]{\overline{#1}}
\DeclareMathOperator{\M}{\mathbb{M}}
\DeclareMathOperator{\sgn}{sgn}
\DeclareMathOperator{\re}{Re}
\DeclareMathOperator{\im}{Im}
\DeclareMathOperator{\Tr}{Tr}
\newtheorem{theorem}{Theorem}
\newtheorem{lemma}[theorem]{Lemma}
\newtheorem{proposition}[theorem]{Proposition}
\theoremstyle{remark}
\newtheorem*{remark}{Remark}
\numberwithin{equation}{section}
\begin{document}

\title[Periods of the $j$-function along infinite geodesics]{Periods of the $j$-function along infinite geodesics and mock modular forms}
\author{Nickolas Andersen}
\address{Department of Mathematics\\
University of Illinois\\
Urbana, IL 61801} 
\email{nandrsn4@illinois.edu}
\subjclass[2010]{11F30}

\begin{abstract}
Zagier's well-known work on traces of singular moduli relates the coefficients of certain weakly holomorphic modular forms of weight $1/2$ to traces of values of the modular $j$-function at imaginary quadratic points. A real quadratic analogue was recently studied by Duke, Imamo\=glu, and T\'oth. They showed that the coefficients of certain weight $1/2$ mock modular forms
\[
	f_D = \sum_{d>0} a(d,D) q^d, \qquad D>0
\] 
are given in terms of traces of cycle integrals of the $j$-function. 
Their result applies to those coefficients $a(d,D)$ for which $dD$ is not a square. 
Recently Bruinier, Funke, and Imamo\=glu employed a regularized theta lift to show that the coefficients $a(d,D)$ for square $dD$ are traces of regularized integrals of the $j$-function.
In the present paper we provide an alternate approach to this problem. 
We introduce functions $j_{m,Q}$ (for $Q$ a quadratic form) which are related to the $j$-function and show, by modifying the method of Duke, Imamo\=glu, and T\'oth, that the coefficients for which $dD$ is a square are traces of cycle integrals of the functions $j_{m,Q}$.
\end{abstract}

\maketitle

\section{Introduction}
For a nonzero integer $d\equiv 0,1\pmod{4}$, let $\sQ_d$ denote the set of binary quadratic forms $Q(x,y)=[a,b,c]=ax^2+bxy+cy^2$ with discriminant $b^2-4ac=d$ which are positive definite if $d<0$. 
The modular group $\Gamma=\PSL_2(\Z)$ acts on these forms in the usual way, resulting in finitely many classes $\Gamma \backslash \sQ_d$.

If $d<0$ and $Q\in \sQ_d$ then $Q(x,1)$ has exactly one root $\tau_Q$ in $\H$, namely
\[
	\tau_Q = \frac{-b+\sqrt{d}}{2a}.
\]
The values of the modular $j$-invariant
\[
	j(\tau) := \frac1q + 744 + 196884q + \cdots, \qquad q:=e^{2\pi i \tau}
\]
at the points $\tau_Q$ are called \emph{singular moduli}; they are algebraic integers which play many important roles in number theory. 
For instance, when $d$ is a fundamental discriminant (i.e. the discriminant of $\Q(\sqrt{d})$), the field $\Q(j(\tau_Q))$ is the Hilbert class field of $\Q(\tau_Q)$. 

For $Q\in \sQ_d$, let $\Gamma_Q$ denote the stabilizer of $Q$ in $\Gamma$. 
Then $\Gamma_Q=\{1\}$ unless $Q\sim [a,0,a]$ or $Q\sim [a,a,a]$, in which case it has order $2$ or $3$, respectively. 
For $f\in \C[j]$, we define the modular trace of $f$ by
\begin{equation} \label{eq:def-tr-neg-d}
	\Tr_d(f) := \sum_{Q\in \Gamma \backslash \sQ_d} \frac{1}{|\Gamma_Q|} f(\tau_Q).
\end{equation}
A well-known theorem of Zagier \cite{Zagier:Traces} states that, for $j_1 := j-744$, the series
\[
	g_{1}(\tau) := \frac1q - 2 - \sum_{0>d\equiv 0,1(4)} \Tr_{d}(j_1) \, q^{-d}
\]
is in $M_{3/2}^!$, the space of weakly holomorphic modular forms of weight $3/2$ on $\Gamma_0(4)$ which satisfy the plus space condition (see Section \ref{sec:poincare} for details). 
Zagier further showed that $g_1$ is the first member of a basis $\{g_D\}_{0<D\equiv 0,1(4)}$ for $M_{3/2}^!$. 
Each function $g_D$ is uniquely determined by having a Fourier expansion of the form
\begin{equation} \label{eq:g-D-basis}
	g_D(\tau) = q^{-D} - \sum_{0>d\equiv 0,1(4)} a(D,d) q^{-d}.
\end{equation}
The coefficients $a(D,d)$ with $D$ a fundamental discriminant are given by
\[
	a(D,d) = -\Tr_{d,D}(j_1),
\]
where $\Tr_{d,D}$ denotes the twisted trace
\begin{equation} \label{eq:twisted-traces}
	\Tr_{d,D}(f) := \frac{1}{\sqrt{D}}\sum_{Q\in \Gamma \backslash \sQ_{dD}} \frac{\chi_D(Q)}{|\Gamma_Q|} f(\tau_Q),
\end{equation}
and $\chi_D:\sQ_{dD}\to\{\pm 1\}$ is defined in \eqref{eq:def-chi-D} below.

If $Q$ has positive nonsquare discriminant, then $Q(x,1)$ has two irrational roots. 
Let $S_Q$ denote the geodesic in $\H$ connecting the roots, oriented counter-clockwise if $a>0$ and clockwise if $a<0$. 
In this case the stabilizer $\Gamma_Q$ is infinite cyclic, and $C_Q:=\Gamma_Q\backslash S_Q$ defines a closed geodesic on the modular curve. 
In analogy with \eqref{eq:twisted-traces} we define, for $dD>0$ not a square,
\begin{equation} \label{eq:def-tr-pos-nonsq-d}
	\Tr_{d,D}(f) := \frac{1}{2\pi}\sum_{Q\in \Gamma \backslash \sQ_{dD}} \chi_D(Q) \int_{C_Q} f(\tau) \frac{d\tau}{Q(\tau,1)}.
\end{equation}
Let $\M_{1/2}^+$ denote the space of mock modular forms of weight $1/2$ on $\Gamma_0(4)$ satisfying the plus space condition (see Section \ref{sec:poincare} for definitions). 
A beautiful result of Duke, Imamo\=glu, and T\'oth \cite{DIT:CycleIntegrals} shows that the twisted traces \eqref{eq:twisted-traces} and \eqref{eq:def-tr-pos-nonsq-d} appear as coefficients of mock modular forms in a basis $\{f_D\}_{D\equiv 0,1(4)}$ for $\M_{1/2}^+$. 
When $D<0$, the form $f_D$ is a weakly holomorphic modular form, and is uniquely determined by having a Fourier expansion of the form
\[
	f_D(\tau) = q^D + \sum_{0<d\equiv 0,1(4)} a(d,D) q^d.
\]
The coefficients $a(d,D)$ are the same as those in \eqref{eq:g-D-basis}. 
Therefore, when $D$ is a fundamental discriminant, they are given in terms of twisted traces. 
When $D>0$ the mock modular form $f_D$ is uniquely determined by being holomorphic at $\infty$ and having shadow equal to $2g_D$ (see Section \ref{sec:poincare}). 
Let
\[
	f_D(\tau) = \sum_{0<d\equiv 0,1(4)} a(d,D) \, q^d.
\]
If $D$ is a fundamental discriminant and $dD$ is not a square, then Theorem 3 of \cite{DIT:CycleIntegrals} shows that
\[
	 a(d,D) = \Tr_{d,D}(j_1).
\]
In \cite{DIT:CycleIntegrals} the coefficients $a(d,D)$ for square $dD$ are defined as infinite series involving Kloosterman sums and the $J$-Bessel function. The authors leave an arithmetic or geometric interpretation of these coefficients as an open problem. 

When the discriminant of $Q$ is a square, the stabilizer $\Gamma_Q$ is trivial. 
In this case the geodesic $C_Q$ connects two elements of $\P^1(\Q)$, but since any $f\in \C[j]$ has a pole at $\infty$ (which is $\Gamma$-equivalent to every element of $\P^1(\Q)$), the integral
\begin{equation} \label{eq:div-int}
	\int_{C_Q} f(\tau) \, \frac{d\tau}{Q(\tau,1)}
\end{equation}
diverges. 
This is the obstruction to a geometric interpretation of the modular trace for square discriminants.
In a recent paper, Bruinier, Funke, and Imamo\=glu \cite{BFI:RegularizedTheta} address this issue by regularizing the integral \eqref{eq:div-int} and showing that the corresponding modular traces
\[
	\Tr_d(j_1) = \frac{1}{2\pi}\sum_{Q\in \Gamma \backslash \sQ_d} \int_{C_Q}^{\reg} j_1(\tau) \frac{d\tau}{Q(\tau,1)}
\]
give the coefficients of $f_1$.
Their proof is quite different than the argument given in \cite{DIT:CycleIntegrals} for nonsquare discriminants.
It involves a regularized theta lift and applies to a much more general class of modular functions (specifically, weak harmonic Maass forms of weight $0$ on any congruence subgroup of $\Gamma$).

In this paper we provide an alternate definition of $\Tr_{d,D}$ when $dD$ is a square which does not rely on regularizing a divergent integral. 
Instead, we show that the coefficients of $f_D$ for square $dD$ are given in terms of convergent integrals of functions $j_{1,Q}$ which are related to $j_1$. 
Furthermore, using this definition we show that a suitable modification of the proof of Theorem 3 of \cite{DIT:CycleIntegrals} for nonsquare discriminants works for all discriminants.

We first define a sequence of modular functions $\{j_m\}_{m\geq 0}$ which forms a basis for the space $\C[j]$. 
We let $j_0:=1$ and for $m\geq 1$ we define $j_m$ to be the unique modular function of the form
\[
	j_m(\tau) = q^{-m} + \sum_{n>0} c_m(n) q^n.
\]
Note that $j_1=j-744$ was already defined above.

We define the functions $j_{m,Q}$ as follows. 
When the discriminant of $Q$ is a square, each root of $Q(x,y)$ corresponds to a cusp $\alpha=\frac rs \in \P^1(\Q)$ with $(r,s)=1$. 
Let $\gamma_\alpha:=\pmatrix **s{-r} \in \Gamma$ be a matrix that sends $\alpha$ to $\infty$, and define
\[
	j_{m,Q}(\tau) := 
	j_m(\tau) - 2\sum_{\alpha \in \{\text{roots of }Q\}}
		\sinh(2\pi m \im \gamma_\alpha \tau) \, e(m\re \gamma_\alpha \tau),
\]
where $e(x) := e^{2\pi i x}$. 
Note that there are only two terms in the sum. 
When $dD>0$ is a square, we define the twisted trace of $j_m$ by
\begin{equation} \label{eq:def-tr-sq-d}
	\Tr_{d,D}(j_m) := \frac{1}{2\pi}\sum_{Q\in \Gamma \backslash \sQ_{dD}} \chi_D(Q) \int_{C_Q} j_{m,Q}(\tau) \frac{d\tau}{Q(\tau,1)}.
\end{equation}
\begin{remark}
If $\alpha$ is a root of $Q$ and $\sigma\in \Gamma$, then $\sigma \alpha$ is a root of $\sigma Q$ (see \eqref{eq:action-Gamma} below). 
Since $\gamma_{\sigma\alpha} \sigma = \gamma_\alpha$, we have $j_{m,\sigma Q}(\sigma \tau)=j_{m,Q}(\tau)$.
Together with \eqref{eq:d-tau} below and the fact that $\chi_D(\sigma Q) = \chi_D(Q)$, this shows that the summands in \eqref{eq:def-tr-sq-d} remain unchanged by $Q\mapsto \sigma Q$. Therefore $\Tr_{d,D}(j_m)$ is well-defined.
\end{remark}

\begin{theorem} \label{thm:main}
Suppose that $0<d\equiv 0,1\pmod{4}$ and that $D>0$ is a fundamental discriminant.
With $\Tr_{d,D}(j_1)$ defined in \eqref{eq:def-tr-pos-nonsq-d} and \eqref{eq:def-tr-sq-d} for nonsquare and square $dD$, respectively, the function
\[
	f_D(\tau) = \sum_{0<d\equiv 0,1(4)} \Tr_{d,D}(j_1) \, q^d
\]
is a mock modular form of weight $1/2$ for $\Gamma_0(4)$ with shadow $2g_D$.
\end{theorem}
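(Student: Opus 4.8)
The plan is to adapt the argument of Duke, Imamo\=glu, and T\'oth, which realizes the twisted traces as Fourier coefficients of weight-$1/2$ Poincar\'e series. First I would reduce the theorem to a statement about Fourier coefficients. For $D>0$ fundamental, \cite{DIT:CycleIntegrals} defines $f_D$ as the mock modular form holomorphic at $\infty$ with shadow $2g_D$, and Theorem~3 there shows that its $d$-th coefficient equals $\Tr_{d,D}(j_1)$, in the sense of \eqref{eq:def-tr-pos-nonsq-d}, whenever $dD$ is not a square. Hence it suffices to prove that $\Tr_{d,D}(j_1)$ — now in the sense of \eqref{eq:def-tr-sq-d} — also equals $a(d,D)$ when $dD$ is a square; once this is known, $\sum_{0<d\equiv 0,1(4)}\Tr_{d,D}(j_1)\,q^d$ has the same Fourier expansion as $f_D$, so it is $f_D$ and the assertions about mock modularity and the shadow follow.

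To compute \eqref{eq:def-tr-sq-d} I would use the Niebur--Poincar\'e realization of $j_m$ (carrying a general $m\ge1$ costs nothing and clarifies the structure; the theorem is the case $m=1$). Write $\phi_s(w):=(\im w)^{1/2}\,I_{s-1/2}(2\pi m\,\im w)\,e(-m\,\re w)$ for the seed; for $\re s$ large the series $F_m(\tau,s):=\sum_{\gamma\in\Gamma_\infty\backslash\Gamma}\phi_s(\gamma\tau)$ converges to a $\Gamma$-invariant eigenfunction of the hyperbolic Laplacian, and $F_m(\tau,1)=\kappa_m j_m(\tau)+\lambda_m$ for explicit constants $\kappa_m>0$ and $\lambda_m$. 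Under this realization $j_{m,Q}$ corresponds, up to the normalization $\kappa_m$ and the additive constant, to the $s=1$ value of the modified series obtained by deleting the two terms $\phi_s(\gamma_\alpha\cdot)$ attached to the roots $\alpha$ of $Q$. Since $\phi_s(\gamma_\alpha\cdot)$ is the only term of $F_m$ that grows as $\tau\to\alpha=\gamma_\alpha^{-1}\infty$, the modified functions $F_{m,Q}(\cdot,s)$ decay at both endpoints of $C_Q$ for $\re s$ large, so the deformed trace
\[
	T_{d,D}(s):=\frac{1}{2\pi}\sum_{Q\in\Gamma\backslash\sQ_{dD}}\chi_D(Q)\int_{C_Q}F_{m,Q}(\tau,s)\,\frac{d\tau}{Q(\tau,1)}
\]
converges there, and the trace \eqref{eq:def-tr-sq-d} will be recovered from $T_{d,D}(s)$ in the limit $s\to1$.

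The heart of the argument is the unfolding of $T_{d,D}(s)$. Using the relations $\gamma_{\sigma\alpha}\sigma=\gamma_\alpha$ and $\chi_D(\sigma Q)=\chi_D(Q)$ from the Remark, the transformation \eqref{eq:d-tau}, and the triviality of $\Gamma_Q$ in the square case (which is both why $C_Q$ is non-compact and why the unfolding here is straightforward), the double sum over $\Gamma$-classes $Q$ and over $\Gamma_\infty\backslash\Gamma$ collapses to a single sum over $\Gamma_\infty$-classes of forms $Q'=[a,b,c]\in\sQ_{dD}$, giving
\[
	T_{d,D}(s)=\frac{1}{2\pi}\sum_{\substack{Q'\in\Gamma_\infty\backslash\sQ_{dD}\\ a\neq 0}}\chi_D(Q')\int_{S_{Q'}}\phi_s(\tau)\,\frac{d\tau}{Q'(\tau,1)},
\]
which is precisely the expression that \cite{DIT:CycleIntegrals} evaluate for nonsquare discriminants, except that the deletion of the two $\gamma_\alpha$-terms from each $F_{m,Q}$ has removed exactly the $\Gamma_\infty$-classes with $a=0$ — those with a root at the cusp $\infty$, which are exactly the ones for which $S_{Q'}$ is vertical and $\int_{S_{Q'}}\phi_s$ diverges. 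Evaluating the surviving seed integrals in closed form (a standard Bessel-function computation; on the semicircle $S_{Q'}$ one has $d\tau/Q'(\tau,1)=d\theta/(\sqrt{dD}\,\sin\theta)$, and the integral depends on $Q'$ only through $a$ and a root of unity in $b\bmod 2a$), assembling the sum over $b\bmod 2a$ with the genus character into a Sali\'e sum and the sum over $a$ into the index of a Bessel function, one arrives at the Kloosterman--Bessel series that \cite{DIT:CycleIntegrals} use to define $a(d,D)$ for square $dD$. Specializing to $m=1$ and $s\to1$ then gives $\Tr_{d,D}(j_1)=a(d,D)$.

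The step I expect to be the main obstacle is the passage $s\to1$. Because $\lambda_m\ne0$, the integral over the non-compact geodesic $C_Q$ degrades as $s\to1$ and $T_{d,D}(s)$ acquires a simple pole there; correspondingly, $\int_{C_Q}j_{m,Q}\,d\tau/Q(\tau,1)$ converges (because $j_{m,Q}$ decays at the cusps, unlike $j_m$ or a constant) but is not simply $\lim_{s\to1}$ of the naive integral. One must show that it is recovered as the constant term at $s=1$ of the appropriately corrected family above — in effect, that the Niebur parameter $s$ supplies exactly the regularization implicit in \cite{DIT:CycleIntegrals}'s definition of $a(d,D)$ for square $dD$ and in the regularized integrals of \cite{BFI:RegularizedTheta}. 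This requires tracking the polar contribution near each cusp and the additive constant $\lambda_m$, and justifying the interchange of the sum over forms with the integral for $\re s$ large; away from these points the computation runs parallel to \cite{DIT:CycleIntegrals}, the genuinely new input being that the extra terms defining $j_{m,Q}$ account precisely for the forms $[0,b,c]$ that would otherwise make the cycle integral diverge.
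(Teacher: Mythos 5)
Your proposal is correct and follows essentially the same route as the paper: the paper proves Theorem \ref{thm:main} as a corollary of Theorem \ref{thm:divisor-sum}, whose proof introduces the deleted-term Poincar\'e series $G_{m,Q}(\tau,s)$, unfolds the trace to a sum over $\Gamma_\infty\backslash\sQ_{dD}$ in which the removed $\gamma_\alpha$-terms account exactly for the forms $[0,\pm b,*]$, evaluates the semicircle integrals via Bessel functions and the sums $S_m(d,D;4c)$, and then handles the $s\to 1$ limit by matching against the series $b(d,D,s)$ defining $a(d,D)$. The points you flag as delicate (convergence of the unfolded sum for $\re s>1$ and the pole cancellation at $s=1$ via the $G_0$-term) are precisely where the paper invests its effort, using the explicit Kloosterman--Bessel formulas and Lemma 4 of \cite{DIT:CycleIntegrals}.
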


It is instructive to consider the special case $d=D=1$. 
In this case, there is one quadratic form $Q=[0,1,0]$ with roots $0$ and $\infty$, so $C_Q$ is the upper half of the imaginary axis. 
Then
\[
	j_{m,Q}(iy) = j_m(iy) - 2\sinh(2\pi m y) - 2\sinh(2\pi m/y),
\]
and we have
\[
	\lim_{y\to 0^+} \frac{j_{m,Q}(iy)}{y} = -4\pi m.
\]
Since $j_{m,Q}(iy)/y = O(1/y^2)$ as $y\to \infty$, the integral
\begin{equation} \label{eq:tr1-j1}
	\Tr_{1,1}(j_m) = \frac{1}{2\pi}\int_0^\infty j_{m,Q}(iy) \frac{dy}{y}
\end{equation}
converges. 
Theorem \ref{thm:main} shows that $\Tr_{1,1}(j_1)=-16.028\ldots$ is the coefficient of $q$ in the mock modular form~$f_1$.

\begin{remark}
The regularization in \cite[eq. (1.10)]{BFI:RegularizedTheta} of the integral \eqref{eq:div-int} essentially amounts to replacing the divergent integral
\[
	\int_1^\infty e^{2\pi y} \, \frac{dy}{y} 
	= \int_{-2\pi}^{-\infty} e^{-t} \, \frac{dt}{t}
\]
by $-107.47\ldots$, which is the Cauchy principal value of the integral
\[
	\int_{-2\pi}^\infty e^{-t} \frac{dt}{t}.
\]
If these were equal, we could deduce that
\[
	\int_0^\infty \left(2\sinh(2\pi y) + 2\sinh(2\pi/y)\right) \frac{dy}{y} = 0,
\]
so the values of $\Tr_{1,1}(j_1)$ in \cite{BFI:RegularizedTheta} and \eqref{eq:tr1-j1} agree.
\end{remark}

The modular traces $\Tr_{d,D}(j_m)$ for $m>1$ are also related to the coefficients $a(D,d)$. With the modular trace now defined when $dD$ is a square, we obtain Theorem 3 of \cite{DIT:CycleIntegrals} with the condition ``$dD$ not a square'' removed. 
Theorem \ref{thm:main} follows as a corollary.

\begin{theorem} \label{thm:divisor-sum}
Let $a(D,d)$ be the coefficients defined above. For $0<d\equiv 0,1\pmod{4}$ and $D>0$ a fundamental discriminant we have
\begin{equation} \label{eq:thm-2}
	\Tr_{d,D}(j_m) = \sum_{n|m} \pfrac{D}{m/n} n \, a(n^2 D,d).
\end{equation}
\end{theorem}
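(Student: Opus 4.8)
The plan is to follow the strategy Duke, Imamo\=glu, and T\'oth used to prove their Theorem~3, and to check that the passage from $j_m$ to $j_{m,Q}$ is exactly the modification that makes that strategy go through when $dD$ is a square. Recall the shape of their argument: both sides of \eqref{eq:thm-2} get written through Poincar\'e series. On the right, the exact formula for the coefficients $a(n^2D,d)$ of the weight $1/2$ mock modular form $f_D$ (equivalently, those of the basis element $g_D$) expresses each as a sum over moduli $c>0$ of a twisted half-integral-weight Kloosterman (Sali\'e) sum times an $I$-Bessel value; this is a standard Poincar\'e-series computation. On the left, one uses that $j_m$ is a normalization of the value at $s=1$ of the Niebur--Poincar\'e series
\[
	\mathcal F_m(\tau,s) = \sum_{\gamma\in\Gamma_\infty\backslash\Gamma} (\im\gamma\tau)^{1/2}\, I_{s-1/2}\bigl(2\pi m\,\im\gamma\tau\bigr)\, e(-m\,\re\gamma\tau),
\]
continued analytically from $\re s>1$; since $I_{1/2}(x)=\sqrt{2/\pi x}\,\sinh x$, the $\gamma\in\Gamma_\infty$ term of $2\pi\sqrt m\,\mathcal F_m(\tau,1)$ is precisely $2\sinh(2\pi m\,\im\tau)\,e(-m\,\re\tau)$, which is the structure of the correction terms appearing in $j_{m,Q}$.

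Next I would unfold the cycle integral. Fix a split $Q\in\sQ_{dD}$ with roots $\alpha,\alpha'\in\P^1(\Q)$. Using the transformation rule \eqref{eq:d-tau} for $d\tau/Q(\tau,1)$ and the invariance of the summand recorded in the Remark, conjugate by $\gamma_\alpha$ so that $\alpha\mapsto\infty$ and $S_Q$ becomes a vertical line; \eqref{eq:def-tr-sq-d} then reduces, class by class over $\Gamma\backslash\sQ_{dD}$, to an integral of the shape $\int_0^\infty j_{m,\gamma_\alpha Q}(u_0+iv)\,\tfrac{dv}{v}$ with $u_0=\gamma_\alpha(\alpha')$. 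Substituting $j_m=2\pi\sqrt m\,\mathcal F_m(\cdot,1)$ and expanding the Poincar\'e series, the $\Gamma_\infty\backslash\Gamma$-sum splits according to the lower-left entry $c$ of $\gamma$; the terms with $c=0$ (together with the analogous ``bottom'' term at the root $\alpha'$) are exactly those whose integral over the \emph{non-compact} geodesic diverges, and they are precisely the two $\sinh$-terms subtracted in the definition of $j_{m,Q}$. Hence term-by-term integration of $j_{m,Q}$ over $C_Q$ is legitimate; after interchanging the geodesic integral with the surviving ($c\neq 0$) summation and with the analytic continuation in $s$, the local integrals evaluate (by the standard $\int_0^\infty I_{s-1/2}(\cdots)\,e^{-\cdots}\,\tfrac{dv}{v}$ formulas, specialized to $s=1$) to $I$-Bessel values, and the count of admissible $\gamma$ at each modulus $c$, weighted by $\chi_D(Q)$ and summed over the classes $Q$, assembles into a twisted Sali\'e--Kloosterman sum of index $m$. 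Thus $\Tr_{d,D}(j_m)$ is exhibited as its own $c$-sum of Kloosterman sums times Bessel values.

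It remains to match the two $c$-sums. Here one invokes a Hecke-type identity for the relevant Kloosterman sums, which factors the index-$m$ twisted sum as $\sum_{n\mid m}\pfrac{D}{m/n}\,n$ times the index-$n^2D$ sum; cancelling the common Bessel factors, this is exactly \eqref{eq:thm-2}. Theorem~\ref{thm:main} follows by setting $m=1$: the right-hand side of \eqref{eq:thm-2} then collapses to $a(D,d)$, so $\sum_{d>0}\Tr_{d,D}(j_1)q^d=\sum_{d>0}a(d,D)q^d=f_D$, which has shadow $2g_D$ by construction. The principal obstacle is the unfolding step: in the nonsquare case $C_Q$ is compact and interchanging summation and integration is automatic, whereas here one must show rigorously that the divergent part of the unfolded Niebur series over the non-compact geodesic consists of exactly the subtracted cusp-terms (which also delivers the convergence of \eqref{eq:def-tr-sq-d} in general), and must justify interchanging the $\Gamma$-summation, the geodesic integration, and the analytic continuation at $s=1$. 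This is where essentially all of the new work sits; once it is done, the Kloosterman-sum bookkeeping is formally identical to that of \cite{DIT:CycleIntegrals}.
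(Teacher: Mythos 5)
Your proposal follows essentially the same route as the paper: unfold the modified Niebur--Poincar\'e series against the cycles, recognize the two subtracted $\sinh$-terms as precisely the divergent contributions, evaluate the surviving terms as Sali\'e/Kloosterman sums against Bessel functions, and finish with the Hecke-type factorization of those sums together with the exact formula \eqref{eq:a-d-D-limit} for the coefficients. The one genuine difference is how the unfolding is organized, and the paper's choice makes the convergence and interchange issues you identify as the main obstacle essentially disappear: instead of conjugating each cycle to a vertical line and sorting the $\gamma$-sum by lower-left entry, the paper uses the explicit representatives $[a,b,0]$ of Lemma \ref{lem:square-reps} and the bijection $(\gamma,Q)\mapsto\gamma Q$ from $\Gamma_\infty\backslash\Gamma\times\Gamma\backslash\sQ_{dD}$ onto $\Gamma_\infty\backslash\sQ_{dD}$, under which the two excluded matrices per class correspond exactly to the degenerate forms $[0,\pm b,*]$; every surviving term is then a manifestly convergent integral of the seed function over a semicircle, evaluated by Lemma 9 of \cite{DIT:CycleIntegrals}. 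Two details your sketch elides: those semicircle integrals produce $J$-Bessel (not $I$-Bessel) values, which is what the series \eqref{eq:b-d-D-s-formula} for $b(d,D,s)$ requires; and $j_{m,Q}(\tau,s)$ carries the Eisenstein-type correction $G_{0,Q}(\tau,s)$, whose trace $T_0(s)$ must be computed separately and matched (via \eqref{eq:T-m-b-d-D} and Lemma 4 of \cite{DIT:CycleIntegrals}) against the subtracted term in the regularized limit \eqref{eq:a-d-D-limit} before letting $s\to 1$.
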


In Section \ref{sec:preliminaries} we recall some facts about binary quadratic forms, focusing on forms of square discriminant. 
In Section \ref{sec:poincare} we define mock modular forms and describe the functions $j_{m,Q}$ in terms of Poincar\'e series.
The proof of Theorem \ref{thm:divisor-sum} comprises Section \ref{sec:proofs}. 
We follow the proof given in \cite{DIT:CycleIntegrals} for nonsquare discriminants, modifying as needed when the discriminant is a square.

\section{Binary quadratic forms} \label{sec:preliminaries}
In this section, we recall some basic facts about binary quadratic forms and the characters $\chi_D$, and we give an explicit description of the classes $\Gamma\backslash \sQ_{d}$ when $d>0$ is a square. 
Throughout, we assume that $d,D\equiv 0,1\pmod{4}$.

Recall that the left action of $\gamma=\pmatrix ABCD \in \Gamma$ on $Q(x,y)$ is given by the right action of $\gamma^{-1}$; that is,
\begin{equation} \label{eq:action-Gamma}
	\gamma Q = Q\gamma^{-1} = Q(Dx-By,-Cx+Ay).
\end{equation}
This action is compatible with the linear fractional action $\gamma\tau=\frac{A\tau+B}{C\tau+D}$ on the roots of $Q(\tau,1)$; if $\tau_Q$ is a root of $Q$, then $\gamma\tau_Q$ is a root of $\gamma Q$.

Suppose that $D$ is a fundamental discriminant. If $Q=[a,b,c] \in \sQ_{dD}$, we define
\begin{equation} \label{eq:def-chi-D}
	\chi_D(Q) := 
	\begin{cases}
		\pfrac{D}{r} &\text{ if $(a,b,c,D)=1$ and $Q$ represents $r$ with $(r,D)=1$},\\
		0 &\text{ if $(a,b,c,D)>1$}.
	\end{cases}
\end{equation}
The basic theory of these characters is presented nicely in \cite[Section 2]{GKZ:Heegner}. It turns out that $\chi_D$ is well-defined on classes $\Gamma \backslash \sQ_{dD}$ and that
\[
	\chi_D(-Q) = (\sgn D)\chi_D(Q).
\]

If $Q=[a,b,c]\in \sQ_d$ with $d>0$ then the cycle $S_Q$ is the curve in $\H$ defined by the equation
\[
	a|\tau|^2 + b\re \tau + c=0.
\]
When $a=0$, $S_Q$ is the vertical line $\re \tau=-c/b$ oriented upward. When $a\neq 0$, $S_Q$ is a semicircle oriented counterclockwise if $a>0$ and clockwise if $a<0$. If $\gamma\in \Gamma$ then we have $\gamma S_Q = S_{\gamma Q}$. We define
\[
	d\tau_Q := \frac{\sqrt{d} \, d\tau}{Q(\tau,1)},
\]
so that if $\tau'=\gamma\tau$ for some $\gamma\in \Gamma$, we have 
\begin{equation} \label{eq:d-tau}
	d\tau'_{\gamma Q} = d\tau_Q.
\end{equation}

When $d>0$ is a square, we can describe a set of representatives for $\Gamma \backslash \sQ_d$ explicitly, as the next lemma shows.

\begin{lemma} \label{lem:square-reps}
Suppose that $d=b^2$ for some $b\in \N$. Then the set
\[
	\left\{ [a,b,0] : 0\leq a<b \right\}
\]
is a complete set of representatives for $\Gamma \backslash \sQ_d$.
\end{lemma}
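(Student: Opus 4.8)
The plan is to exhibit an explicit reduction algorithm that carries an arbitrary $Q \in \sQ_d$ with $d = b^2$ to one of the listed forms $[a,b,0]$ with $0 \le a < b$, and then to verify that these $b$ forms are pairwise inequivalent. First I would note that $\sQ_{b^2}$ consists exactly of the forms $[a_0,b_0,c_0]$ with $b_0^2 - 4a_0c_0 = b^2$; such a form is \emph{not} primitive-irreducible in the sense that $Q(x,y)$ factors over $\Q$ into two rational linear forms, so $Q$ represents $0$ nontrivially. Concretely, one root of $Q(x,1)$ is rational, hence $Q$ has a rational point $(x_0,y_0)$ with $Q(x_0,y_0) = 0$; choosing it primitive and completing to a matrix in $\SL_2(\Z)$ via \eqref{eq:action-Gamma}, we may move that root to the cusp $\infty$, i.e. after applying a suitable $\gamma \in \Gamma$ we may assume the leading coefficient of $\gamma Q$ is $0$. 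Thus every class contains a form $[0, b', c']$ (with $b'^2 = b^2$, so $b' = \pm b$; since $[0,-b,-c] = [0,b,c]$ up to the $\Gamma$-action by $\pmatrix {0}{-1}{1}{0}$ or by relabeling, one can normalize $b' = b$). Actually it is cleaner to arrange the rational root to sit at $0$ rather than $\infty$: then $Q = [a, b, 0]$ directly.

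Next I would use the translations $T = \pmatrix 1101$ to normalize the remaining coefficient. The form $[a,b,0]$ has roots $0$ and $-b/a$ (when $a \ne 0$; root $\infty$ when $a = 0$). Acting by $T^n$ sends $[a,b,0] = ax^2 + bxy$ to $a(x - ny)^2 + b(x-ny)y = ax^2 + (b - 2an)xy + (an^2 - bn)y^2$, which is again a form representing $0$ but with a nonzero last coefficient unless the constant term $an^2 - bn = n(an - b)$ vanishes. So $T$ alone does not preserve the shape $[a,b,0]$; instead I would compute the action of the full stabilizer-type moves. The right approach: the forms $[a,b,0]$ for $0 \le a < b$ all have $\infty$ \emph{not} a root and $0$ a root; two such forms are $\Gamma$-equivalent iff there is $\gamma$ fixing the set $\{$ roots $\}$ compatibly, and one checks via \eqref{eq:action-Gamma} that $\gamma[a,b,0] = [a',b,0]$ forces $\gamma$ to be upper-triangular, i.e. $\gamma = \pm T^n$, under which $[a,b,0] \mapsto [a - bn \cdot(\text{sign bookkeeping}), b, 0]$; tracking this carefully shows $a$ changes by multiples of $b$, so the residues $a \bmod b$ are a complete set of invariants. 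Hence the $b$ forms listed are inequivalent and exhaust $\Gamma \backslash \sQ_{b^2}$.

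The main obstacle I anticipate is the bookkeeping in the last step — pinning down exactly which $\gamma \in \Gamma$ can map one form $[a,b,0]$ to another and checking that the induced map on the first coefficient is precisely $a \mapsto a + b\Z$ (and not, say, $a \mapsto \pm a + b\Z$, which would collapse the list). This requires using that $b > 0$ is fixed by the discriminant and orientation (the cycle $S_Q$ for $[a,b,0]$ with $a \ne 0$ is the semicircle through $0$ and $-b/a$, oriented by $\sgn a$, so a $\Gamma$-map preserving this data is quite rigid), together with the relation $\pmatrix {0}{-1}{1}{0}$ that swaps the two roots and must therefore be excluded if it changes the shape. A clean way to sidestep part of this is to instead count: $|\Gamma \backslash \sQ_{b^2}|$ can be computed independently (e.g. it equals $b$, matching the class-number-type formula for square discriminants, or one can see it from the bijection with $\Gamma_\infty \backslash \{$ primitive vectors of norm related to $b \}$), and then it suffices to show the $b$ listed forms are pairwise inequivalent, which is the easier direction — distinct residues $a \bmod b$ give forms with distinct second roots $-b/a \pmod{\Z}$ in lowest terms, hence distinct cusps, hence inequivalent. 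I would present the argument in that order: reduce an arbitrary form to the shape $[a,b,0]$, reduce $a$ modulo $b$, then prove inequivalence via the cusp invariant, and finally invoke the count (or the direct rigidity computation) to conclude these are all the classes.
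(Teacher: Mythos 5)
Your overall strategy --- move a rational root of $Q$ to $0$ to reach the shape $[a,b,0]$, then classify the equivalences among these forms --- is the same as the paper's, but two of your key steps are wrong as stated. First, the triangularity is backwards. You correctly compute that $T^n=\pmatrix 1n01$ sends $[a,b,0]$ to $[a,b-2an,an^2-bn]$, which does \emph{not} have the required shape, yet you then assert that any $\gamma$ with $\gamma[a,b,0]=[a',b,0]$ must be upper triangular, ``i.e.\ $\pm T^n$.'' Taken together, these two claims would show that \emph{no} nontrivial equivalences exist among the forms $[a,b,0]$, $a\in\Z$, contradicting the lemma (which asserts $[a,b,0]\sim[a+b,b,0]$). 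The correct statement is that $\gamma$ must be \emph{lower} triangular: the relevant unipotents are $\pmatrix 10k1$, which fix the root $0$ and send $[a,b,0]$ to $[a-kb,b,0]$. The paper proves the rigidity by writing out the three coefficient equations for $\gamma=\pmatrix ABCD$ and showing that $B(aB-Ab)=0$ forces $B=0$ (the alternative $aB=Ab$ leads to determinant $-1$), whence $AD=1$ and $a'\equiv aD^2\equiv a\pmod b$. Second, your normalization of the sign of the middle coefficient rests on the claim that ``$[0,-b,-c]=[0,b,c]$ up to the action of $\pmatrix 0{-1}10$,'' which is false: those forms are $-Q$ and $Q$, which are in general inequivalent. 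Sending a root of $Q$ to $0$ produces $[a,\pm b,0]$, and the sign genuinely needs an argument; the paper handles $[a,-b,0]$ by exhibiting the explicit matrix $\pMatrix{a/g}{-b/g}{*}{\bar{a}/g}$ with $a\bar{a}\equiv g^2\pmod b$, $g=(a,b)$. (Your parenthetical ``or by relabeling'' can be made to work --- sending the \emph{other} root to $0$ flips the sign of the middle coefficient --- but that too requires proof.)

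Your proposed fallback does not rescue the argument. The invariant ``second root $-b/a$ modulo $\Z$ in lowest terms'' fails to separate classes: for $b=6$ the forms $[1,6,0]$, $[2,6,0]$, $[3,6,0]$ have second roots $-6,-3,-2$, all congruent to $0$ modulo $\Z$, yet they lie in distinct classes. Moreover, all cusps of $\PSL_2(\Z)$ are equivalent, so ``distinct cusps'' cannot by itself certify inequivalence of forms; what is needed is an invariant of the ordered \emph{pair} of roots, which amounts to the same rigidity computation. Finally, the independent count $|\Gamma\backslash\sQ_{b^2}|=b$ that you wish to invoke is essentially the content of the lemma itself, so appealing to it is circular unless you supply a genuinely independent derivation.
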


\begin{proof}
Let $Q\in \sQ_d$. We will show that
\begin{enumerate}
	\item $Q\sim [a,b,0]$ for some $a$ with $0\leq a<b$, and
	\item if $[a,b,0]\sim [a',b,0]$ then $a\equiv a'\pmod{b}$.
\end{enumerate}
Since the roots of $Q(x,y)$ are rational, there exist integers $r,s,t,u$ with $(r,s)=1$ such that
\[
	Q(x,y) = (rx+sy)(tx+uy).
\]
If $\gamma = \pmatrix rs** \in \Gamma$ then $\gamma Q=[a,\varepsilon b,0]$ for some $\varepsilon \in \{\pm 1\}$ and some $a\in \Z$. Since $\pmatrix 10k1 [a,\varepsilon b,0]=[a-\varepsilon k b,\varepsilon b,0]$ we may assume that $0\leq a<b$.
Suppose that $\varepsilon=-1$. 
Let $g=(a,b)$ and define $\bar{a}$ by the conditions $a\bar{a}\equiv g^2\pmod b$ and $0\leq \bar{a}<b$. 
Then
\[
	\pMatrix{a/g}{-b/g}{*}{\bar{a}/g}[a,-b,0]=[\bar{a},b,0],
\]
and claim ($1$) follows.

Suppose that $[a,b,0]\sim [a',b,0]$. Then there exists $\pmatrix ABCD \in \Gamma$ with $A>0$ such that
\begin{gather}
	D(aD-bC)=a', \label{eq:sq-red-1} \\
	b(AD+BC)-2aBD=b, \label{eq:sq-red-2} \\
	B(aB-Ab)=0. \label{eq:sq-red-3}
\end{gather}
Let $g=(a,b)$. If $aB-Ab=0$ then $A=a/g$ and $B=b/g$, so \eqref{eq:sq-red-2} implies that $AD-BC=-1$, a contradiction. So by \eqref{eq:sq-red-3} we have $B=0$ which, together with \eqref{eq:sq-red-2}, implies that $AD=1$. Then \eqref{eq:sq-red-1} shows that $a'\equiv aD^2\equiv a\pmod b$. This proves claim ($2$).
\end{proof}

\section{Mock modular forms and Poincar\'e series} \label{sec:poincare}

We define mock modular forms following \cite{DIT:CycleIntegrals} (see also \cite{BF:ThetaLifts}, \cite{Ono:Visions}, and \cite{Zagier:MockTheta}).
Let $k\in 1/2+\Z$. We say that $f:\H\to\C$ has weight $k$ for $\Gamma_0(4)$ if for all $\pmatrix abcd \in \Gamma_0(4)$ we have
\begin{equation} \label{eq:transform}
f\left(\frac{a\tau+b}{c\tau+d}\right) = \pfrac cd^{2k} \varepsilon_d^{-2k} (c\tau+d)^k f(\tau),
\end{equation}
where $\pfrac cd$ is the Kronecker symbol and
\[
\varepsilon_d := 
\begin{cases}
	1 &\text{ if } d\equiv 1\pmod4,\\
	i &\text{ if } d\equiv 3\pmod4.
\end{cases}
\]
We say that $f=\sum a(n) q^n$ satisfies the plus space condition if the coefficients $a(n)$ are supported on integers $n \gg -\infty$ with $(-1)^{k-1/2}n\equiv 0,1\bmod 4$.
Let $M_k^!$ denote the space of functions which are holomorphic on $\H$, have weight $k$ for $\Gamma_0(4)$, and satisfy the plus space condition.

A holomorphic function $f:\H\to\C$ which satisfies the plus space condition is called a mock modular form of weight $1/2$ if there exists a function $g\in M_{3/2}^!$, called the shadow of $f$, such that the completed function $f+g^*$ has weight $1/2$ for $\Gamma_0(4)$. 
Here $g^*$ is the nonholomorphic Eichler integral defined in (1.4) of \cite{DIT:CycleIntegrals}.

In Section 2 of \cite{DIT:CycleIntegrals}, the mock modular forms $f_D$ are constructed explicitly using nonholomorphic Maass-Poincar\'e series. 
For $D>0$ the form $f_D$ is the holomorphic part of $D^{-1/2}h_D$, where $h_D$ is defined in Proposition 1 of \cite{DIT:CycleIntegrals}. If
\[
	f_D(\tau) = \sum_{0<d\equiv 0,1(4)} a(d,D) q^d
\]
then by (2.15), (2.21), (2.29), and Lemma 5 of \cite{DIT:CycleIntegrals} we have
\begin{equation} \label{eq:a-d-D-limit}
	a(d,D) = (dD)^{-\frac12}\lim_{s\to\frac34^+} \left( b(d,D,s) - \frac{b(d,0,s)b(0,D,s)}{b(0,0,s)} \right),
\end{equation}
where
\begin{equation} \label{eq:b-d-D-s-formula}
	b(d,D,s) = \sum_{c=1}^\infty K^+(d,D;4c) \times
	\begin{cases}
		2^{-\frac32}\pi (dD)^{\frac 14}c^{-1}J_{2s-1}\pfrac{\pi \sqrt{dD}}{c} &\text{if } dD>0,\\
		2^{-4s}\pi^{s+\frac 14} (d+D)^{s-\frac 14}c^{-2s} &\text{if $dD=0$ and $d+D\neq 0$},\\
		2^{\frac12-6s} \pi^\frac12 \Gamma(2s) c^{-2s} &\text{if }d=D=0.
	\end{cases}
\end{equation}
Here $J_{2s-1}$ is the $J$-Bessel function and $K^+(d,D;4c)$ is the modified Kloosterman sum
\[
	K^+(d,D;4c) := (1-i) \sum_{a\bmod 4c} \pfrac {4c}a \varepsilon_a e\pfrac{da+D\bar a}{4c} \times 
	\begin{cases}
		1 &\text{ if $c$ is even},\\
		2 &\text{ otherwise},
	\end{cases}
\]
where $\bar a$ denotes the inverse of $a$ modulo $4c$. 
Equation \eqref{eq:b-d-D-s-formula} shows that $b(d,D,s)=b(D,d,s)$, so for $d,D>0$ we have
\begin{equation} \label{eq:d-D-symmetric}
	a(d,D) = a(D,d).
\end{equation}

To prove Theorem \ref{thm:divisor-sum} we need to express $j_{m,Q}(\tau,s)$ in terms of certain modified Poincar\'e series $G_{m,Q}(\tau,s)$.
Let $\phi:\R^+\to \C$ be a smooth function satisfying $\phi(y) = O_\epsilon(y^{1+\epsilon})$ for any $\epsilon>0$, and let $m\in \Z$. 
Define the Poincar\'e series associated to $\phi$ by
\begin{equation} \label{eq:def-G-m}
	G_m(\tau,\phi) := \sum_{\gamma \in \Gamma_\infty \backslash \Gamma} e(-m \re \gamma \tau) \phi(\im \gamma \tau).
\end{equation}
As in \cite{Fay} and \cite{Niebur}, we make the specialization
\begin{equation} \label{eq:def-phi}
	\phi(y) = \phi_{m,s}(y) :=
	\begin{cases}
		y^s &\text{ if }m=0,\\
		2\pi |m|^\frac12 y^\frac12 I_{s-\frac12}(2\pi|m|y) &\text{ if }m\neq 0,
	\end{cases}
\end{equation}
where $I_{s-\frac12}$ is the $I$-Bessel function and $\re s>1$ (to guarantee convergence). We write $G_m(\tau,s):=G_m(\tau,\phi_{m,s})$ and we define
\begin{equation} \label{eq:def-j-m}
	j_m(\tau,s) := G_m(\tau,s) - \frac{2 \pi^{s+\frac12} m^{1-s}\sigma_{2s-1}(m)}{\Gamma(s+\frac12)\zeta(2s-1)}G_0(\tau,s).
\end{equation}
As explained in Section 4 of \cite{DIT:CycleIntegrals} and Section 6.4 of \cite{BFI:RegularizedTheta}, when $m>0$ the function $G_m(\tau,s)$ has an analytic continuation to $\re s>3/4$, and when $m=0$ the function $G_m(\tau,s)$ has a pole at $s=1$ arising from its constant term. 
The factor multiplied by $G_0(\tau,s)$ in \eqref{eq:def-j-m} is chosen to cancel the pole of $G_0(\tau,s)$ at $s=1$ and to eliminate the constant term of $G_m(\tau,1)$.
Furthermore, we have $j_m(\tau,1)=j_m(\tau)$.

Recall that for $d>0$ a square and $Q\in \sQ_d$, the functions $j_{m,Q}(\tau)$ are defined as
\begin{equation} \label{eq:def-j-m-Q-2}
	j_{m,Q}(\tau) := 
	j_m(\tau) - 2\sum_{\alpha \in \{\text{roots of }Q\}}
		\sinh(2\pi m \im \gamma_\alpha \tau) \, e(m\re \gamma_\alpha \tau).
\end{equation}
Since $\phi_{m,1}(y) = 2\sinh(2\pi |m| y)$, the two terms subtracted from $j_m(\tau)$ in \eqref{eq:def-j-m-Q-2} are the terms in the Poincar\'e series \eqref{eq:def-G-m} corresponding to $\gamma_\alpha$ for the roots $\alpha$ of $Q$. 
It turns out that these are the terms which cause the integral
\[
	\int_{C_Q} G_m(\tau,1) \frac{d\tau}{Q(\tau,1)}
\]
to diverge. 
In analogy with \eqref{eq:def-j-m} and \eqref{eq:def-j-m-Q-2}, we define
\begin{equation} \label{def-j-m-Q-tau-s}
	j_{m,Q}(\tau,s) := G_{m,Q}(\tau,s) - \frac{2 \pi^{s+\frac12} m^{1-s}\sigma_{2s-1}(m)}{\Gamma(s+\frac12)\zeta(2s-1)}G_{0,Q}(\tau,s),
\end{equation}
where $G_{m,Q}(\tau,s)$ is the modified Poincar\'e series
\[
	G_{m,Q}(\tau,s) := 
	\sum_{\substack{\gamma \in \Gamma_\infty \backslash \Gamma \\ \gamma \, \neq \, \gamma_\alpha}}
	e(-m \re \gamma \tau) \phi_{m,s}(\im \gamma \tau).
\]
Since the two terms subtracted from $G_0(\tau,s)$ are killed by the pole of $\zeta(2s-1)$, we conclude that
\begin{equation}
	j_{m,Q}(\tau,1) = j_{m,Q}(\tau).
\end{equation}
Therefore, to compute the cycle integrals of the functions $j_{m,Q}(\tau)$, it is enough to compute the cycle integrals of the functions $G_{m,Q}(\tau,s)$.

\section{Proof of Theorem \ref{thm:divisor-sum}} \label{sec:proofs}
Throughout this section we assume that $dD>0$ is a square. 
The main ingredient in the proof of Theorem \ref{thm:divisor-sum} is the following proposition, which computes the traces of the functions $G_{m,Q}(\tau,s)$ in terms of the $J$-Bessel function and the exponential sum
\[
	S_m(d,D;4c) := \sum_{\substack{b\bmod 4c \\ b^2\equiv dD \bmod{4c}}} \chi_D \left([c,b,\tfrac{b^2-dD}{4c}] \right) e\pfrac{mb}{2c}.
\]
See Proposition 4 of \cite{DIT:CycleIntegrals} for the analogous formula for the traces of the functions $G_m(\tau,s)$.

\begin{proposition} \label{prop:tr-g-m}
Let $\re s>1$ and $m \geq 0$. Suppose that $dD>0$ is a square. Then
\[
	\sum_{Q\in \Gamma \backslash \sQ_{dD}} \frac{\chi_D(Q)}{B(s)} \int_{C_Q} G_{m,Q}(\tau,s) \, d\tau_Q = 
	\begin{dcases}
		\tfrac{\pi}{\sqrt 2} m^{\frac 12} (dD)^{\frac 14} \sum_{c=1}^\infty \frac{S_m(d,D;4c)}{c^{\frac 12}} J_{s-\frac 12} \left( \frac{\pi m\sqrt{dD}}{c} \right) &\text{ if }m>0,\\
		2^{-s-1} (dD)^{\frac s2} \sum_{c=1}^\infty \frac{S_0(d,D;4c)}{c^s} & \text{ if } m=0,
	\end{dcases}
\]
where B(s) := $2^s \Gamma(\frac s2)^2/\Gamma(s)$.
\end{proposition}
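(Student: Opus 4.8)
The plan is to follow the unfolding argument of Proposition 4 of \cite{DIT:CycleIntegrals}, adapting it to square discriminant by exploiting the explicit representatives furnished by Lemma~\ref{lem:square-reps}. First I would use the $\Gamma$-invariance recorded in the remark after \eqref{eq:def-tr-sq-d} (and \eqref{eq:d-tau}) to replace the sum over classes $Q \in \Gamma\backslash\sQ_{dD}$ by a sum over the representatives $[a,b_0,0]$ with $0 \le a < b_0$, where $b_0 = \sqrt{dD}$. Writing out the defining series for $G_{m,Q}(\tau,s)$ as a sum over $\gamma \in \Gamma_\infty\backslash\Gamma$ with the two cosets $\gamma_\alpha$ removed, and parametrizing the cycle $C_Q$, the integral $\int_{C_Q} G_{m,Q}(\tau,s)\, d\tau_Q$ becomes a sum over $\gamma$ of integrals of $e(-m\re\gamma\tau)\phi_{m,s}(\im\gamma\tau)$ against $d\tau_Q$. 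Here the key point is that the stabilizer $\Gamma_Q$ is trivial when $d$ is a square, so there is no quotient $\Gamma_Q\backslash S_Q$ to worry about; instead $C_Q$ is the full geodesic $S_Q$ from one rational cusp to the other, and the removal of the two ``bad'' cosets $\gamma_\alpha$ is exactly what is needed to make each resulting integral converge.

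Next I would carry out the unfolding. Changing variables $\tau \mapsto \gamma\tau$ in each term and reorganizing the double sum over $\gamma \in \Gamma_\infty\backslash\Gamma$ and over $Q$, the combined sum should collapse, just as in \cite{DIT:CycleIntegrals}, to an integral over a single geodesic (or a union of geodesic segments indexed by $c = $ the lower-left entry of $\gamma$) with the arithmetic data — the values $\chi_D(Q)$ and the exponentials $e(mb/2c)$ coming from $\re\gamma\tau$ evaluated at the endpoints — packaged into the exponential sum $S_m(d,D;4c)$. The reduction theory for forms of square discriminant (Lemma~\ref{lem:square-reps}) is what lets me identify the resulting congruence sum over $b \bmod 4c$ with $b^2 \equiv dD \bmod 4c$ as precisely $S_m(d,D;4c)$: the forms $[c,b,(b^2-dD)/4c]$ run over the relevant $\Gamma_\infty$-orbits of forms with the given lower coefficient $c$. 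After the unfolding, the inner integral is the Mellin-type transform $\int e(-mx)\phi_{m,s}(y)\, d\tau_Q$ along the geodesic; for $m > 0$ the identity $\int_0^\infty I_{s-1/2}(2\pi m y)\, e(\cdots)\, \tfrac{dy}{y}$-type formulas produce the $J$-Bessel function $J_{s-1/2}(\pi m\sqrt{dD}/c)$, and for $m = 0$ the transform of $y^s$ produces the elementary factor $c^{-s}(dD)^{s/2}$, matching the two cases of the stated formula. Keeping track of the normalizing constant $B(s) = 2^s\Gamma(s/2)^2/\Gamma(s)$ through these Bessel and Gamma-function manipulations is bookkeeping but must be done carefully to land on the exact constants $\tfrac{\pi}{\sqrt 2}$ and $2^{-s-1}$.

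The main obstacle I anticipate is the convergence and interchange-of-summation issue at the ``bad'' cosets. In the nonsquare case of \cite{DIT:CycleIntegrals} the closed geodesic $C_Q = \Gamma_Q\backslash S_Q$ is compact, so every term of the Poincaré series integrates to something finite and the unfolding is clean; in the square case $S_Q$ runs out to a cusp, the terms $\gamma_\alpha$ give divergent integrals of $e^{2\pi m y}$ near that cusp, and it is exactly their removal that we have built into $G_{m,Q}$. I would need to justify that, term by term, each remaining integral converges (the exponential growth of $\phi_{m,s}$ at the cusp is killed because for $\gamma \ne \gamma_\alpha$ the image $\im\gamma\tau \to 0$, not $\infty$, as $\tau$ approaches the relevant endpoint of $S_Q$), and that the sum over the remaining $\gamma$ converges absolutely for $\re s > 1$, so that unfolding is legitimate. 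A secondary, more technical point is verifying that the contributions of the two removed cosets, summed over all $Q$ in a class, reassemble correctly — i.e. that no arithmetic is lost by the removal — which is where the precise shape of $\gamma_\alpha = \pmatrix{*}{*}{s}{-r}$ and the compatibility $\gamma_{\sigma\alpha}\sigma = \gamma_\alpha$ from the remark are used. Once convergence is secured, the rest is the same residue-free unfolding computation as in \cite{DIT:CycleIntegrals}, specialized via Lemma~\ref{lem:square-reps}.
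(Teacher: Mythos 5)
Your proposal follows essentially the same route as the paper: reduce to the representatives of Lemma \ref{lem:square-reps}, unfold the Poincar\'e series over $\Gamma_\infty\backslash\Gamma$ so that the two removed cosets $\gamma_\alpha$ become exactly the forms $[0,\pm\sqrt{dD},*]$ (whose vertical geodesics are what cause divergence), parametrize the remaining semicircles, and evaluate the resulting Bessel-type integral to assemble $S_m(d,D;4c)$. The approach and the identification of the key convergence issue are both correct.
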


\begin{proof}
Let $b=\sqrt{dD}$. By Lemma \ref{lem:square-reps}, a complete set of representatives for $\Gamma \backslash \sQ_{dD}$ is given by 
\[
	\left\{Q_a=[a,b,0] : 0\leq a<b\right\}.
\]
Let $g=(a,b)$. 
Then the roots of $Q_a=ax^2+bxy$ in $\P^1(\Q)$ are $0$ and $\beta:=-\frac {b'}{a'}$, where $a'=a/g$ and $b'=b/g$. 
The corresponding matrices are
\[
	\gamma_0 = \pMatrix{0}{-1}{1}{0}, \qquad \gamma_\beta = \pMatrix**{a'}{b'}.
\]
Thus, replacing $\tau$ by $\gamma^{-1}\tau$ in the integral, we have
\[
	\sum_{Q\in \Gamma \backslash \sQ_{dD}} \chi_D(Q) \int_{C_Q} G_{m,Q}(\tau,s) \, d\tau_Q = \sum_{a \bmod b} \chi_D([a,b,0]) \sum_{\substack{\gamma \in \Gamma_\infty \backslash \Gamma \\ \gamma \neq \, \gamma_0,\,\gamma_\beta}} \int_{C_{\gamma Q}} e(-m \re \tau)\phi_{m,s}(\im \tau) \, d\tau_{\gamma Q}.
\]
The map $(\gamma,Q) \mapsto \gamma Q$ is a bijection
\[
	\Gamma_\infty \backslash \Gamma \times \Gamma \backslash \sQ_{dD} \longleftrightarrow \Gamma_\infty \backslash \sQ_{dD}
\]
which sends $(\gamma_0,[a,b,0])$ to $[0,-b,a]$ and $(\gamma_\beta,[a,b,0])$ to $[0,b,g \, \bar{a'}]$, where $a'\bar{a'}\equiv 1\bmod b'$ and $g=(a,b)$. 
Since $\pmatrix 1k01 [0,b,c]=[0,b,c-kb]$, we conclude that
\[
	\sum_{Q\in \Gamma \backslash \sQ_{dD}} \chi_D(Q) \int_{C_Q} G_{m,Q}(\tau,s) \, d\tau_Q 
	= \sum_{\substack{Q\in \Gamma_\infty \backslash \sQ_{dD} \\ Q\neq [0,\pm b,*]}} \chi_D(Q) \int_{C_Q} e(-m \re \tau) \phi_{m,s}(\im \tau) \, d\tau_Q.
\]
The remainder of the proof follows the proofs of Lemmas 7 and 8 and Proposition 4 of \cite{DIT:CycleIntegrals}. 

Since we have eliminated those terms in the sum with $a=0$, we can parametrize each cycle $C_Q$ with $Q=[a,b,c]$ by
\[
	\tau =
	\begin{cases}
		\re \tau_Q + e^{i\theta} \im \tau_Q &\text{ if } a>0, \\
		\re \tau_Q - e^{-i\theta} \im \tau_Q &\text{ if } a<0,
	\end{cases}
	\qquad 0\leq \theta\leq\pi
\]
where
\[
	\tau_Q := -\frac{b}{2a} + i \frac{\sqrt{dD}}{2|a|}
\]
is the apex of the semicircle. 
We then have
\[
	Q(\tau,1) = \frac{dD}{4a}
	\begin{cases}
		e^{2i\theta}-1 &\text{ if }a>0,\\
		e^{-2i\theta}-1 &\text{ if }a<0,
	\end{cases}
\]
which gives $d\tau_Q = d\theta/\sin\theta$. 
Hence for $a\neq 0$ we have
\begin{equation} \label{eq:param-cycle}
	\int_{C_Q} e(-m \re \tau) \phi_{m,s}(\im \tau) \, d\tau_Q = e\pfrac{mb}{2a}\int_0^\pi e\left(-\frac{m\sqrt{dD}}{2a} \cos\theta\right) \phi_{m,s}\left(\frac{\sqrt{dD}}{2|a|} \sin\theta\right) \frac{d\theta}{\sin\theta}.
\end{equation}
Consider the sum of the terms corresponding to $Q$ and $-Q$, where $Q=[a,b,c]$ and $a>0$. Since $\chi_D(Q)=\chi_D(-Q)$
we find that
\begin{multline} \label{eq:Q-minus-Q}
	\chi_D(Q)\int_{C_Q} G_{m,Q}(\tau,s) \, d\tau_Q + \chi_D(-Q)\int_{C_{-Q}} G_{m,-Q}(\tau,s) \, d\tau_{-Q} \\ 
	= 2 \,\chi_D(Q) \,e\pfrac{mb}{2a}\int_0^\pi \cos\left(\frac{\pi m\sqrt{dD}}{a} \cos\theta\right) \phi_{m,s}\left(\frac{\sqrt{dD}}{2a} \sin\theta\right) \frac{d\theta}{\sin\theta}.
\end{multline}
In what follows, we assume that $m>0$ (the $m=0$ case is similar). 
By \eqref{eq:def-phi} above and Lemma~9 of \cite{DIT:CycleIntegrals}, the right-hand side of \eqref{eq:Q-minus-Q} equals
\[
	\pi \sqrt{\frac{2m}{a}} \, (dD)^\frac14 B(s) \, \chi_D(Q) \, e\pfrac{mb}{2a} J_{s-\frac 12}\pfrac{\pi m \sqrt{dD}}{a}.
\]
Therefore
\begin{multline*}
	\sum_{Q\in \Gamma \backslash \sQ_{dD}} \chi_D(Q) \int_{C_Q} G_{m,Q}(\tau,s) \, d\tau_Q\\
	= \pi \sqrt{2m} (dD)^\frac14 B(s) \sum_{\substack{Q\in \Gamma_\infty \backslash \sQ_{dD} \\ a>0}} \frac{\chi_D(Q)}{\sqrt{a}} e\pfrac{mb}{2a} J_{s-\frac12} \pfrac{\pi m \sqrt{dD}}{a}.
\end{multline*}
Let $\sQ_{dD}^+ = \{[a,b,c]\in \sQ_{dD} : a>0\}$. 
Since $\pmatrix1k01[a,b,c]=[a,b-2ka,*]$, we have a bijection
\[
	[a,b,c] \longleftrightarrow (a,b\bmod 2a)
\]
between $\Gamma_\infty \backslash \sQ_{dD}^+$ and $\{(a,b) : a\in \N \text{ and } 0\leq b<2a\}$. 
Therefore,
\begin{multline*}
	\sum_{Q\in \Gamma \backslash \sQ_{dD}} \chi_D(Q) \int_{C_Q} G_{m,Q}(\tau,s) \, d\tau_Q\\
	= \pi \sqrt{2m} (dD)^\frac14 B(s) \sum_{a=1}^\infty a^{-\frac12} J_{s-\frac12} \pfrac{\pi m \sqrt{dD}}{a} \sum_{\substack{b(2a) \\ \frac{b^2-dD}{4a}\in \Z}} \chi\left([a,b,\tfrac{b^2-dD}{4a}]\right) e\pfrac{mb}{2a}.
\end{multline*}
The latter sum is equal to $\frac12 S_m(d,D,4a)$, so we conclude (after replacing $a$ by $c$) that
\[
	\sum_{Q\in \Gamma \backslash \sQ_{dD}} \frac{\chi_D(Q)}{B(s)} \int_{C_Q} G_{m,Q}(\tau,s) \, d\tau_Q = 
	\tfrac{\pi}{\sqrt 2} m^{\frac 12} (dD)^{\frac 14} \sum_{c=1}^\infty \frac{S_m(d,D;4c)}{c^{\frac 12}} J_{s-\frac 12} \left( \frac{\pi m\sqrt{dD}}{c} \right). \qedhere
\]
\end{proof}

We now complete the proof of Theorem \ref{thm:divisor-sum}, following the proof of Theorem~3 in \cite{DIT:CycleIntegrals}. 
Let
\[
	T_m(s) := \sum_{Q\in \Gamma \backslash \sQ_{dD}} \frac{\chi_D(Q)}{B(s)} \int_{C_Q} G_{m,Q}(\tau,s) d\tau_Q.
\] 
Recall that $d\tau_Q=\sqrt{dD} \, d\tau/Q(\tau,1)$. By \eqref{def-j-m-Q-tau-s} and \eqref{eq:d-D-symmetric}, to prove Theorem~\ref{thm:divisor-sum} we need to show that
\begin{equation} \label{to-show}
	\sum_{n|m} \pfrac Dn (m/n) \, a\left(d,\frac{m^2 D}{n^2}\right) = (dD)^{-\frac 12}\lim_{s\to 1}\left( T_m(s) - \frac{2\pi^{s+\frac 12}m^{1-s}\sigma_{2s-1}(m)}{\Gamma(s+\frac 12)\zeta(2s-1)}\,T_0(s) \right).
\end{equation}
By Proposition 3 of \cite{DIT:CycleIntegrals} we have
\[
	S_m(d,D;4c) = \tfrac12 \sum_{n|(m,c)} \ptfrac Dn \sqrt{\tfrac nc} \, K^+\left(d,\tfrac{m^2D}{n^2};\tfrac{4c}{n}\right),
\]
which, together with Proposition \ref{prop:tr-g-m}, gives
\begin{multline} \label{eq:kloosterman-divisor}
	T_m(s) =
	\begin{dcases}
		\tfrac{\pi}{2\sqrt2}m^\frac12 (dD)^\frac 14 \sum_{n|m} \ptfrac Dn n^{-\frac12} \sum_{c=1}^\infty c^{-1} K^+\left(d,\tfrac{m^2}{n^2}D;4c\right) J_{s-\frac12}\left(\tfrac{\pi m \sqrt{dD}}{nc}\right) &\text{ if }m>0,\\
		2^{-s-2} (dD)^{\frac s2} \sum_{n=1}^\infty \ptfrac Dn n^{-s} \sum_{c=1}^\infty c^{-s-\frac12} K^+(d,0;4c)&\text{ if }m=0.
	\end{dcases}
\end{multline}
Comparing \eqref{eq:kloosterman-divisor} with \eqref{eq:b-d-D-s-formula}, we see that
\begin{equation} \label{eq:T-m-b-d-D}
	T_m(s) = 
	\begin{dcases}
		\sum_{n|m} \ptfrac Dn \, b( d,\tfrac{m^2}{n^2}D,\tfrac s2+\tfrac 14)&\text{ if }m>0,\\
		\pi^{-\frac{s+1}{2}}2^{s-1}D^\frac s2 L_D(s) \, b(d,0,\tfrac s2+\tfrac 14)&\text{ if }m=0,
	\end{dcases}
\end{equation}
where $L_D(s) = \sum_{n>0}\pfrac Dn n^{-s}$ is the Dirichlet $L$-function.
By \eqref{eq:a-d-D-limit} and \eqref{eq:T-m-b-d-D}, the left-hand side of \eqref{to-show} equals
\[
	(dD)^{-\frac 12}\lim_{s\to 1} \left(T_m(s) - \frac{2^{1-s}\pi^{\frac{s+1}2}D^{-\frac s2}}{L_D(s)b(0,0,\tfrac s2+\tfrac 14)} T_0(s) \sum_{n|m} \ptfrac Dn b(0,\tfrac{m^2D}{n^2},\tfrac s2+\tfrac 14)\right).
\]
It remains to show that
\[
	b(0,0,\tfrac s2+\tfrac 14)^{-1} \sum_{n|m} \ptfrac Dn b(0,\tfrac{m^2D}{n^2},\tfrac s2+\tfrac 14) = \frac{2^s D^\frac s2 m^{1-s}\sigma_{2s-1}(m)\pi^\frac s2 L_D(s)}{\Gamma(s+\tfrac 12)\zeta(2s-1)},
\]
which follows from Lemma 4 of \cite{DIT:CycleIntegrals}. \qed

\bibliographystyle{plain}
\bibliography{cycle_integrals_j_bib}

\end{document}